\newcommand{\e}{\text{e}}
\newcommand{\R}{\mathbb{R}}
\newcommand{\Z}{\mathbb{Z}}
\renewcommand{\S}{\mathbb{S}}
\newcommand{\ds}{\,\mathrm ds}
\newcommand{\Kosc}{K_{\mathrm{osc}}}
\newcommand{\ra}[1]{#1}
\newcommand{\tGamma}{t^{\Gamma}}
\newcommand{\CGNS}{C_{G\!N\!S}}
\newcommand{\rot}{\text{rot }}
\renewcommand{\theequation}{\arabic{section}.\arabic{equation}}
\newtheorem{definition}{Definition}
\newtheorem{theorem}[definition]{Theorem}
\newtheorem{proposition}[definition]{Proposition}
\newtheorem{lemma}[definition]{Lemma}
\newtheorem{corollary}[definition]{Corollary}
\title{On the planar free elastic flow with small oscillation of curvature}
\author{Ben Andrews${}^1$}\thanks{${}^1$: ben.andrews@anu.edu.au, ORCID 0000-0002-6507-0347, Mathematical Sciences Institute, Australian National University, Australia.}
\author{Glen Wheeler${}^2$}\thanks{${}^2$: glenw@uow.edu.au, ORCID 0000-0003-3314-5647, School of Mathematics and Physics, University of Wollongong, Australia}
\begin{document}

\begin{abstract}
The free elastic flow that begins at any  closed curve exists for all time.
If the initial curve is an $\omega$-fold covered circle (``$\omega$-circle'') the solution expands self-similarly.
Very recently, Miura and the second author showed that (topological) $\omega$-circles that are close to multiply-covered round circles are asymptotically stable under the planar free elastic flow, which means that upon rescaling the rescaled flow converges smoothly to the stationary (in the rescaled setting) $\omega$-circle.
Closeness in that work was measured via the derivative of the curvature scalar.
In the present paper, we improve this by requiring closeness in terms of the curvature scalar itself.
The convergence rate we obtain is sharp.
\end{abstract}

\maketitle

\section{Introduction}

Euler's elastic energy of a smooth closed immersed plane curve
$\Gamma:\mathbb S^1\to\mathbb R^2$ with arclength parameter $s$ (here $\S^1 = \R/L\Z$ where $L$ is the length of $\Gamma$) and curvature scalar $k$ is
\[
\mathcal E(\Gamma) = \int k^2\,\mathrm ds.
\]
Its $L^2(ds)$-gradient flow $\Gamma:\S^1\times[0,T)\to\R^2$ is the fourth-order evolution
\begin{equation}\label{eq:free-elastic}
\partial_t\Gamma = -\big(2k_{ss}+k^3\big)\nu = F(\Gamma(\cdot,t))\nu,
\quad\quad
F(\Gamma) := -(2k_{ss}+k^3),
\end{equation}
where $s$ subscripts denote derivatives with respect to the arclength parameter, 
and $\nu$ is the unit normal vector, defined here as the counter-clockwise rotation of $\tau = \partial_s\gamma$ through $\frac\pi2$. 
Along \eqref{eq:free-elastic} the energy is strictly decreasing:
\[
\frac{\mathrm d}{\mathrm dt}\,\mathcal E(\Gamma(\cdot,t))
 = -\int \big(2k_{ss}+k^3\big)^2\,\mathrm ds
 < 0,
\]
 as there are no closed free elastica.
The flow exists smoothly (each $\Gamma(\cdot,t)$ is smooth) for all time \cite{DKS02}. 
Thus we may always take $T=\infty$, and do so for the remainder of the article.

Circles (and, more generally, $\omega$-circles) $C_\rho$ with radius $\rho>0$ are special: they expand self-similarly under \eqref{eq:free-elastic}, with radius $\rho = \rho(t)$ satisfying 
\(
\rho(t) = \left(\rho_0^4 + 4t\right)^{\frac14}
\,.
\)
It is natural therefore to investigate the asymptotic stability of expanding solutions.
We factor out the expansion by passing to a continuous rescaling, and changing the time parameter.
This leads to the rescaled free elastic flow $\gamma:\S^1\times[0,\infty)\to\R^2$ satisfying 
\begin{equation}\label{eq:rescaled}
\partial_t\gamma  =  -\Big(2k_{ss}+k^3-\lambda(t)\,(\gamma\cdot\nu)\Big)\,\nu,
\quad
\lambda(t) = \frac1{2\omega\pi}\left(2 \int k_s^2\,\mathrm ds- \int k^4\,\mathrm ds\right),
\quad L(\gamma(\cdot,t)) = 2\omega\pi.
\end{equation}
On the rescaled flow we work always in the arclength parametrisation, and so $\S^1 = \R/2\omega\pi\Z$.

Let us briefly describe how to obtain \eqref{eq:rescaled} from \eqref{eq:free-elastic}.
Suppose we have a solution $\Gamma:\S^1\times[0,T)\to\R^2$ to \eqref{eq:free-elastic}.
Fix length to $2\omega\pi$ (so that the curvature of an $\omega$-circle is one) and define the scaling factor
\(
\sigma(\tGamma) := \frac{L_\Gamma(\tGamma)}{2\omega\pi}\,.
\)
Here and below we use the super/subscript $\Gamma$ to denote quantities on the unrescaled flow $\Gamma$. 
Rescale space and reparametrise time by
\begin{equation}\label{eq:rescaling-map}
\gamma(\cdot,t(\tGamma))\ :=\ \sigma(\tGamma)^{-1}\,\Gamma(\cdot,\tGamma),
\qquad\text{ with }\qquad \frac{dt}{d\tGamma} = \sigma(\tGamma)^{-4},\quad t(0)=0.
\end{equation}
Any tangential terms produced by this change are absorbed by a reparametrisation and do not
affect the geometry.

For the rescaled curve $\gamma$
\[
ds_\Gamma=\sigma\,ds,\qquad
k_\Gamma=\sigma^{-1}k,\qquad
(k_\Gamma)_s=\sigma^{-2}k_s,\qquad
(k_\Gamma)_{ss}=\sigma^{-3}k_{ss}.
\]
Consequently \(F(\Gamma(\cdot,\tGamma))\nu_\Gamma
= 
-\big(2(k_\Gamma)_{ss}+k_\Gamma^3\big)\nu_\Gamma
\ =\ -\sigma^{-3}\big(2k_{ss}+k^3\big)\nu.
\)
Differentiating \eqref{eq:rescaling-map} with respect to $\tGamma$ gives
\[
\partial_\tGamma\gamma
=  \partial_t\gamma\ \frac{dt}{d\tGamma}
= -\frac{\sigma'}{\sigma}\,\gamma
  +\sigma^{-1}\partial_\tGamma\Gamma
= -\frac{\sigma'}{\sigma}\,(\gamma\cdot\nu)\,\nu - \sigma^{-4}\big(2k_{ss}+k^3\big)\nu
\ +\ \text{(tangential)}.
\]
Simplifying yields
\[
\partial_t\gamma
= -\Big(2k_{ss}+k^3-\underbrace{\big(-\sigma^3\sigma'\big)}_{=:\,\lambda(t)}(\gamma\cdot\nu)\Big)\nu.
\]
Thus $\gamma$ satisfies the rescaled flow \eqref{eq:rescaled} as claimed.
To identify $\lambda$, note that
\[
\frac{d}{d\tGamma}L_\Gamma(\Gamma(\cdot,\tGamma))
= - \int_{\Gamma} k_\Gamma\,\big(-2(k_\Gamma)_{ss}-k_\Gamma^3\big)\,ds_\Gamma
= -2 \int_{\Gamma}(k_\Gamma)_s^2\,ds_\Gamma+ \int_{\Gamma}k_\Gamma^4\,ds_\Gamma,
\]
and hence (using the scaling rules above)
\[
\sigma'(\tGamma)=\frac{1}{2\omega\pi}\frac{d}{d\tGamma}L_\Gamma
= \frac{1}{{2\omega\pi}\sigma^3} \left(-2 \int k_s^2\,ds+ \int k^4\,ds\right).
\]
This implies $\lambda$ is given by the second expression in \eqref{eq:rescaled}.

Using the identity $\int k\,(\gamma\cdot\nu)\,ds=-{2\omega\pi}$ (which follows using $k\nu = \gamma_{ss}$ and integration by parts) one checks
immediately that
\[
\frac{d}{dt}\,L(\gamma(\cdot,t))
= - \int k\Big(-2k_{ss}-k^3+\lambda(\gamma\cdot\nu)\Big)\,ds
= \Big(-2 \int k_s^2+ \int k^4\Big) - \lambda \int k(\gamma\cdot\nu)\,ds
=0,
\]
so the rescaled evolution preserves length and the unit $\omega$-circle $\gamma_\omega:\S^1\to\R^2$ centred at the origin is stationary for \eqref{eq:rescaled}.


The long-time dynamics of \eqref{eq:rescaled} near $\omega$-circles is the focus of this paper. Miura and the second author \cite{MW25} proved that  the flow with initial data sufficiently close to an $\omega$-circle (measured at the level of $\partial_s k$) is asymptotically stable: the rescaled flow converges smoothly to an $\omega$-circle. The present work improves this by removing one derivative from the smallness hypothesis.
A natural, scale-invariant way to quantify curvature-level closeness is via the  {normalised oscillation of curvature}
\begin{equation}\label{eq:Kosc}
K_{\mathrm{osc}}(\gamma(\cdot,t)) := L(\gamma(\cdot,t))\,\big\|k-\bar k\big\|_{L^2(ds)}^2,
\qquad
\bar k = \frac1{L(\gamma(\cdot,t))}\int  k\,\mathrm ds  = \frac{2\omega\pi}{L(\gamma(\cdot,t))},
\end{equation}
where $\omega\in\mathbb N$ is the turning number of $\gamma$. The quantity $K_{\mathrm{osc}}$ vanishes precisely for $\omega$-circles and is invariant under rescaling.
Our main result is the following.


\begin{theorem}\label{thm:main}
For each turning number $\omega>0$ there exist $\varepsilon_{\omega},\delta_\omega>0$ with the following property.
Let $\gamma_0$ be a smooth closed immersed plane curve with turning number $\omega$, length equal to $2\omega\pi$, and
\begin{equation}
\label{eq:koscsmall}
\Kosc(\gamma_0) \le \varepsilon_{\omega}.
\end{equation}
The rescaled free elastic flow \eqref{eq:rescaled} with initial data $\gamma_0$
exists for all time and converges to the stationary $\omega$-circle $\gamma_\omega$ centred at the origin, with
\[
\Kosc(\gamma(\cdot,t)) \le C_\omega\,e^{-\left(\frac74+\delta_\omega\right) t}\,,\qquad t\ge0\,.
\]
Moreover, for a constant $C=C(\gamma_0,\omega)$,
\begin{equation}
\label{eq:d2conv}
  \mathbf d_2(\gamma(\cdot,t), \gamma_\omega)
\ \le\ C\,e^{-\frac{1}{2}\left(\frac74+r_\omega\right)t}\,,
\end{equation}
where $r_\omega:=\min\{\frac14,\delta_\omega\}\in(0,1/4]$, and $\mathbf d_2$ is the invariant $W^{2,2}$ distance (see Definition \ref{def:Sob-GH}).
For $\omega\in\{1,2,3,4\}$, $r_\omega = 1/4$, and in general $r_\omega = O(\omega^{-2})$.
\end{theorem}

The rate of convergence of the curvature to 1 is sharp and depends dramatically on $\omega$.
The precise definition of $\delta_\omega$ is given in \eqref{eq:defndelta2}; here, we collect key observations about the value of $\delta_\omega$ for various $\omega$.
Asymptotically, $\delta_\omega = O(\omega^{-2})$.
While $\delta_\omega$ always controls the rate of decay of the curvature to 1, the rate of convergence for the parametrisation $\gamma$ to $\gamma_\omega$ involves the convergence rate of translations as well, as seen by the use of $r_\omega$ in \eqref{eq:d2conv}.
The critical value for $\delta_\omega$ is $1/4$: if $\delta_\omega>1/4$, then the rate of convergence for the parametrisation is determined by translations, and if $\delta_\omega<1/4$ the rate is determined by curvature. 
 For $\omega$ small, $\delta_\omega$ is much larger than $1/4$, dipping below this critical value for $\omega\ge5$.
In particular, $\delta_1 = 121/4$, $\delta_2 = 4$, $\delta_3 = 361/324$, $\delta_4 = 25/64$, and $\delta_5 = 361/2500 < 1/4$.
 Thus, for $\omega\in\{1,2,3,4\}$, the convergence rate is limited by that of the translation, and for $\omega\ge5$ the convergence rate is limited by that of curvature.
 
It is possible to remove the influence of translations entirely by making a pre-eminent choice of origin for the unrescaled flow.
 
\begin{theorem}\label{thm:preeminent-origin}
In the setting of Theorem~\ref{thm:main},  the centre of mass of the unrescaled flow converges to a point
\[
p_\infty
:=\lim_{\tGamma\to\infty}\frac1{L_\Gamma(\tGamma)}\int \Gamma\,ds_\Gamma
\ \in\ \R^2.
\]
Define the translated flow and its length-normalisation by
\[
\Gamma^\sharp(\cdot,\tGamma):=\Gamma(\cdot,\tGamma)-p_\infty,
\qquad
\gamma^\sharp(\cdot,t(\tGamma)):=\sigma(\tGamma)^{-1}\Gamma^\sharp(\cdot,\tGamma).
\]
Then there exist constants $C=C(\omega,\Gamma_0)>0$ and $\tGamma_0=\tGamma_0(\omega,\Gamma_0)\ge0$ (where $\Gamma_0 = \Gamma(\cdot,0)$) such that
\[
\mathbf d_2\big(\gamma^\sharp(\cdot,t(\tGamma)),\gamma_\omega\big)
\ \le\ C\,(1+\tGamma)^{-\,\frac18\left(\frac74+\delta_\omega\right)}
\qquad(\tGamma\ge\tGamma_0).
\]
Equivalently, 
\[
\mathbf d_2\big(\gamma^\sharp(\cdot,t),\gamma_\omega\big)
\ \le\ C\,\mathrm e^{-\,\frac12\left(\frac74+\delta_\omega\right)t}
\qquad(t\ge t(\tGamma_0)).
\]
In particular, after this fixed translation of the unrescaled flow, the $\mathbf d_2$ convergence rate
is governed solely by the curvature decay exponent $\frac74+\delta_\omega$ and is not limited by the translation mode.
\end{theorem}

The key parts of our proof are two new integral estimates, that enable us to show that the hypothesis of \cite[Theorem 1.2]{MW25} is eventually satisfied.
Beyond this, we use some novel methods to control translation and deduce decay of the position vector from decay of the curvature.
The first integral estimate provides preservation of the smallness condition \eqref{eq:koscsmall} (and  its exponential decay), and the second integral estimate gives eventual smallness of $||k_s||_2^2$ under the condition \eqref{eq:koscsmall}.
In each case we use elementary arguments.
The first estimate uses a `linearisation'-type method, which is facilitated by a spectral gap phenomenon that enables us to obtain the sharp rate of convergence.
For the second estimate, the decisive point is a favourable cancellation among
the zero-order terms.

\section*{Acknowledgments}
This work was completed while the authors were participating in the Research Program ``Gradient Flows in Geometry and PDE'' (January 2025) at  the MATRIX institute in Creswick, Victoria, Australia.  The authors are grateful to this institute for hosting a valuable meeting that included generous collaboration time.
The first author was additionally supported by ARC grants FL150100126 and DP250103952, and the second author by ARC grants DP250101080 and FT250100880.

\section{Preservation and improvement of \texorpdfstring{$\Kosc$}{Kosc}}

We start by determining the evolution of  the rescaled oscillation of curvature:
\[
e(t):=\int (k-1)^2\,\mathrm ds  = \int k^2\,\mathrm ds-2 \int k\,\mathrm ds+2\omega\pi
 = \int k^2\,\mathrm ds-2\omega\pi\,.
\]
We also use the notation $f:= k-1$ below (and then $e(t) = ||f||_2^2$).

\begin{lemma}\label{lem:ede}
Let $\gamma:\S^1\times[0,\infty)\to\R^2$ be a rescaled free elastic flow \eqref{eq:rescaled}.
Then
\begin{equation}\label{eq:eprime-exact}
\frac{\mathrm d}{\mathrm dt}e(t)
 = -\int \big(2k_{ss}+k^3\big)^2\,\mathrm ds  - \lambda(t)\int k^2\,\mathrm ds.
\end{equation}
\end{lemma}
\begin{proof}
As usual, we have the evolution equations $k_t=(F+\lambda(\gamma\cdot\nu))_{ss}+k^2(F+\lambda(\gamma\cdot\nu))$ and $(\mathrm ds)_t=-k(F+\lambda(\gamma\cdot\nu))\,\mathrm ds$. Hence
\begin{align*}
\frac{\mathrm d}{\mathrm dt}\int k^2\,\mathrm ds
&=\int \big(2k\,k_t-k^3(F+\lambda(\gamma\cdot\nu))\big)\,\mathrm ds
\\&=\int \big(2k(F+\lambda(\gamma\cdot\nu))_{ss}+k^3(F+\lambda(\gamma\cdot\nu))\big)\,\mathrm ds
\\&=\int \big(2k_{ss}+k^3\big)(F+\lambda(\gamma\cdot\nu))\,\mathrm ds.
\end{align*}
We now claim
\begin{equation}\label{eq:key-identity}
\int (2k_{ss}+k^3)\,(\gamma\cdot\nu)\,\mathrm ds  =  -\int k^2\,\mathrm ds.
\end{equation}
It follows by two integrations by parts using
$(\gamma\cdot\nu)_s=-k(\gamma\cdot\tau)$ and $(\gamma\cdot\tau)_s=1+k(\gamma\cdot\nu)$.
Since $\int k\,\mathrm ds=2\omega\pi$ is topological and $L(\gamma(\cdot,t))$ is preserved by \eqref{eq:rescaled}, we have $\frac{\mathrm d}{\mathrm dt}e=\frac{\mathrm d}{\mathrm dt}\int k^2\,\mathrm ds$. Combining the previous equations yields \eqref{eq:eprime-exact}.
\end{proof}

\begin{proposition}\label{prop:quad}
Let $\gamma:\S^1\times[0,\infty)\to\R^2$ be a rescaled free elastic flow \eqref{eq:rescaled}.
Assume $e(t)\le 1$. Then  
\begin{align}
\frac{\mathrm d}{\mathrm dt}e
&= \ra{-Q(f)} + \mathcal R[f], \label{eq:lin-line}
\end{align}
\ra{where
\[
Q(f):=4\int f_{ss}^2\,\mathrm ds
 -10\int f_s^2\,\mathrm ds
 +8\int f^2\,\mathrm ds .
\]}
and where the remainder satisfies the bound
\begin{equation}\label{eq:Rbound}
\mathcal R[f] \le  C\,e^{ 1/2}\,\int f_{ss}^2\,\mathrm ds  +  C\,e^{ 3/2},
\end{equation}
for a universal constant $C>0$ independent of $\omega$. In particular,
\begin{equation}\label{eq:ede-quad}
\frac{\mathrm d}{\mathrm dt}e  \le  -\Big(4-Ce^{ 1/2}\Big) \int f_{ss}^2\ds
 + 10\int f_s^2\ds
 - \Big(8-Ce^{ 1/2}\Big) \int f^2\ds
.
\end{equation}
\end{proposition}
\begin{proof}
\ra{We use the sign convention
\begin{equation}
\label{EQlinearpart}
Q(f) := 4\int f_{ss}^2\ds
- 10\int f_s^2\ds
+ 8\int f^2\ds.    
\end{equation}}
Our approach is to first collect the quadratic (in $f$ and its derivatives) terms to form \ra{$-Q(f)$}, which we shall highlight by boxing them, then label all remaining terms (cubic and higher) as  $\mathbf{(R1)}-\mathbf{(R12)}$. 

\medskip\noindent
\textit{1) The square $\boldsymbol{-\int(2k_{ss}+k^3)^2\ds}$.}
Using $k_{ss}=f_{ss}$ and $(1+f)^3=1+3f+3f^2+f^3$,
\begin{align*}
-\int \big(2k_{ss}+k^3\big)^2\ds
&= -\int \Big(4f_{ss}^2 + 4f_{ss}(1+3f+3f^2+f^3) + (1+f)^6\Big)\ds
\\
&= \boxed{-4 \int f_{ss}^2\ds}
      + \underbrace{\mathbf{(B)} }_{\text{cross term}}
      + \underbrace{\mathbf{(C)}}_{\text{pure powers}},
\end{align*}
where
\[
\mathbf{(B)}=-4 \int f_{ss} \ds 
-12 \int f f_{ss}\ds
-12 \int f^2 f_{ss}\ds
-4 \int f^3 f_{ss}\ds,
\qquad
\mathbf{(C)}=-\int (1+f)^6\ds.
\]

\emph{Cross term $\mathbf{(B)}$.}
Periodicity and integration by parts imply
\[
-4 \int f_{ss}=0,\qquad
-12 \int f f_{ss} = \boxed{+12 \int f_s^2\ds}.
\]
The second term above is quadratic and will be collected later to form \ra{$-Q$}. 
The remaining two pieces of $\mathbf{(B)}$ are kept as remainder terms:
\[
{\mathbf{(R1)} := -12 \int f^2 f_{ss}\ds},\qquad
{\mathbf{(R2)} := -4 \int f^3 f_{ss}\ds}.
\]
\emph{Pure powers $\mathbf{(C)}$.} Using $(1+f)^6=1+6f+15f^2+20f^3+15f^4+6f^5+f^6$ and $\int f\ds=0$:
\begin{align*}
\mathbf{(C)}
&= - 2\omega\pi
    + \boxed{-15e}
      + \mathbf{(R3)}
      + \mathbf{(R4)}
      + \mathbf{(R5)}
      + \mathbf{(R6)}
\end{align*}
where
\[
{\mathbf{(R3)} := -20 \int f^3\ds},\quad
{\mathbf{(R4)} := -15 \int f^4\ds},\quad
{\mathbf{(R5)} := -6 \int f^5\ds},\quad
{\mathbf{(R6)} := - \int f^6\ds}.
\]

\medskip\noindent
\textit{2) The rescaling term $\boldsymbol{-\lambda\int k^2\ds}$.}
We calculate (using \eqref{eq:rescaled})
\begin{align*}
-\lambda \int k^2\ds
&= -\frac1{2\omega\pi} \left(2 \int f_s^2\ds
- \int (1+f)^4\ds\right)({2\omega\pi}+e) \\
&= \underbrace{-2 \int f_s^2\ds
+ \int (1+f)^4\ds}_{\text{free part}}
     +  \underbrace{\Big( -\frac{2}{{2\omega\pi}}\,e \int f_s^2\ds
   + \frac{1}{{2\omega\pi}}\,e \int (1+f)^4\ds\Big)}_{\text{$e$-coupled part}}.
\end{align*}
\emph{Free part.} Expanding, 
\[
-2 \int f_s^2\ds
+ \int (1+f)^4\ds
= \boxed{-2 \int f_s^2\ds}  +  {2\omega\pi}  +  \boxed{6e} + \mathbf{(R7)} + \mathbf{(R8)}
\]
where
\[
{\mathbf{(R7)} := 4 \int f^3\ds},\quad
{\mathbf{(R8)} := \int f^4\ds}.
\]
\emph{$e$-coupled part.} Continuing, 
\begin{align*}
-&\frac{2}{{2\omega\pi}}e \int f_s^2\ds + \frac{1}{{2\omega\pi}}e \int (1+f)^4\ds
= 
{\mathbf{(R9)}}
   + \boxed{ e }
   +  
  {\mathbf{(R10)}}
   +  {\mathbf{(R11)}}
   +  {\mathbf{(R12)}}
\end{align*}
where
\[
{\mathbf{(R9)} := -\frac{2}{{2\omega\pi}}\,e \int f_s^2\ds}
,\quad
{\mathbf{(R10)} := \frac{6}{{2\omega\pi}}\,e^2}
,\quad
{\mathbf{(R11)} := \frac{4}{{2\omega\pi}}\,e \int f^3\ds}
,\quad
{\mathbf{(R12)} := \frac{1}{{2\omega\pi}}\,e \int f^4\ds}.
\]

\medskip\noindent
\textbf{3) Collecting the quadratic  part.}
Adding the boxed terms:
\begin{align*}
\boxed{-4 \int f_{ss}^2\ds}
 &+ \boxed{(12 - 2) \int f_s^2\ds}
 + \boxed{(-15 + 6 + 1) \int f^2\ds}
\\ &= 
-4 \int f_{ss}^2\ds  + 10 \int f_s^2\ds  - 8 \int f^2\ds \ra{= -Q(f)}.
\end{align*}
All other terms are precisely $\mathbf{(R1)}$-$\mathbf{(R12)}$ as boxed above.
To estimate them, we use the Gagliardo-Nirenberg Sobolev inequality \cite{GNS}, stated here for $n=1$ and periodic functions with zero average:
\begin{equation}
\label{eq:gns}
||f_{s^j}||_{L^p} \le \CGNS||f||_{L^q}^{1-\theta}||f_{s^m}||_{L^r}^\theta,
\qquad
\frac1p = j + \theta\left(\frac1r-m\right)+\frac{1-\theta}{q}
\end{equation}
where (for instance) $j/m \le \theta \le 1$, $p\in[1,\infty)$, and $q,r\in[1,\infty]$.
Note that $\CGNS=\CGNS(p,j,\theta,r,m,q)$ does not depend on $f$.
A particularly useful consequence of \eqref{eq:gns} that we use here is when  $j=0$, $m=r=q=2$, which we record below:
%
%
%
%
\begin{equation}
\label{eq:gns-cons}
||f||_{L^p} 
\le \CGNS||f||_{L^2}^{\frac34 + \frac1{2p}}
    ||f_{ss}||_{L^2}^{\frac14 - \frac1{2p}}
    = \CGNS\, e^{\frac38 + \frac1{4p}}
    ||f_{ss}||_{L^2}^{\frac14 - \frac1{2p}}
    ,
\qquad p\ge2\,.
\end{equation}
As $\CGNS$ (for each choice of the parameters $p,j,\theta,r,m,q$) is a universal constant, we absorb it into the $C$ in the estimates below (a universal constant that may change from line to line). We shall explicitly track  $\omega$, which $C$ does not depend on.

Let us now estimate each term in turn.

\medskip
\noindent\textbf{(R1)} $\displaystyle = -12\int f^2 f_{ss}\ds$.  
 Use the H\"older inequality, \eqref{eq:gns-cons} with $p=4$, then Young:
\begin{align*}
|\mathbf{(R1)}| 
 \le 12\|f\|_{L^4}^2\|f_{ss}\|_{L^2}
\le C\,e^{\frac78}\|f_{ss}\|_{L^2}^{\frac14}
\|f_{ss}\|_{L^2}
\le
\,e^{1/2}\,\|f_{ss}\|_{L^2}^2 + C\,e^{3/2}\,.
\end{align*}

\medskip
\noindent\textbf{(R2)} $\displaystyle = -4\int f^3 f_{ss}\ds$.
We apply Young's inequality, \eqref{eq:gns-cons} with $p=6$, then Young again and finally $e\le1$:
\begin{align*}
|\mathbf{(R2)}|
\le \frac12 e^{\frac12}\|f_{ss}\|_{L^2}^2
 + 8 e^{-\frac12}\|f\|_{L^6}^6
\le \frac12 e^{\frac12}\|f_{ss}\|_{L^2}^2
 + C e^{-\frac12 + \frac52}\|f_{ss}\|_{L^2}
\le \,e^{\frac12}\,\|f_{ss}\|_{L^2}^{2}
 + C e^{\frac{3}{2}}
 .
\end{align*}

\medskip
\noindent\textbf{(R3)+(R7)} $\displaystyle = -16\int f^3\ds$.  
 We use \eqref{eq:gns-cons} with $p=3$ and then Young to find
 %

 %
\begin{align*}
\Big|(R3)+(R7)\Big|
&\le C\,e^{\frac{11}{8}}\|f_{ss}\|_{L^2}^{\frac14}
\le e^{\frac12}\,\|f_{ss}\|_{L^2}^2 + C\, e^{\frac32}
\,.
\end{align*}

\medskip
\noindent\textbf{(R5)}+(\textbf{(R4)}+\textbf{(R8)})+\textbf{(R6)} $\displaystyle = -6\int f^5\ds
-14\int f^4\ds
-\int f^6\ds$.
Since $-6f^5 \le f^6 + 9f^4$, 
\begin{align*}
-6\int f^5\ds
-14\int f^4\ds
-\int f^6\ds
\le (1-1)\int f^6\ds
    + (9-14)\int f^4\ds
\le -5\int f^4\ds
\le 0\,.
\end{align*}
We do not need to estimate the absolute value of the remainder terms.\footnote{It would be possible to estimate the absolute value of the remainder terms, but it would take additional space, and we do not see any benefit in pursuing this here. Similarly one may keep a number of `good' terms with favourable signs.}.

\medskip
\noindent\textbf{(R9)} $\displaystyle =-\frac{1}{\omega\pi}\,e\int f_s^2\ds \le 0$.  

\medskip
\noindent\textbf{(R10)} $\displaystyle =\frac{3}{\omega\pi}\,e^2 \le C\omega^{-1} e^{\frac32}$ (using $e\le1$).

\medskip
\noindent\textbf{(R11)} $\displaystyle =\frac{2}{\omega\pi}\,e\int f^3\ds$.  
Using $e\le1$ and the same method as for \textbf{(R3)}+\textbf{(R7)} (note the $\omega$):
\begin{align*}
\Big|\textbf{(R11)}\Big|
&\le C\omega^{-1}\,e^{\frac{19}{8}}\|f_{ss}\|_{L^2}^{\frac14}
\le e^{\frac12}\,\|f_{ss}\|_{L^2}^2 + C\omega^{-\frac87}\, e^{\frac32}
\,.
\end{align*}

\medskip
\noindent\textbf{(R12)} $\displaystyle =\frac{1}{2\omega\pi}\,e\int f^4\ds$.  
 Using \eqref{eq:gns-cons} with $p=4$, then Young, then $e\le1$, we find
\begin{align*}
|\mathbf{(R12)}|
&\le C\omega^{-1}\,e^{\frac{11}{4}}\|f_{ss}\|_{L^2}^{\frac12}
\le e^{\frac12}\,\|f_{ss}\|_{L^2}^2
 + C\omega^{-\frac43} e^{\frac{7}2}
\le e^{\frac12}\,\|f_{ss}\|_{L^2}^2
 + C\omega^{-\frac43} e^{\frac32}
\end{align*}
since $e\le 1$.

\medskip
\noindent\emph{Conclusion.}
Collecting $\mathbf{(R1)}$-$\mathbf{(R12)}$ we obtain 
\[
\mathcal R[f]
 \le  C\,e^{ 1/2}\,\|f_{ss}\|_{L^2}^2 
 + C\left(
    1 + \omega^{-\frac43} + \omega^{-\frac87} + \omega^{-1}
 \right)\,e^{3/2}
 \le  C\,e^{ 1/2}\,\|f_{ss}\|_{L^2}^2 
 + C\,e^{3/2},
\]
for a universal constant $C$ (note that $\omega\ge1$), which is precisely \eqref{eq:Rbound}. 
Combining \eqref{eq:Rbound} with \eqref{eq:lin-line} yields \eqref{eq:ede-quad} and completes the proof.
\end{proof}

\begin{lemma}\label{lem:gap}
For every $\omega\in\mathbb N$ and every $f\in H^2$ with zero mean one has
\begin{equation}\label{eq:coerciveQ}
4\int f_{ss}^2\,\mathrm ds-10\int f_s^2\,\mathrm ds+8\int f^2\,\mathrm ds
 \ge \lambda_\omega\,\int f^2\,\mathrm ds,
\end{equation}
with
\begin{equation}
\label{eq:defndelta}
\lambda_\omega = \min_{n\in\mathbb Z\setminus\{0\}}\Big\{4\left(\frac{n}{\omega}\right)^4-10\left(\frac{n}{\omega}\right)^2+8\Big\}
\,.
\end{equation}
\end{lemma}
\begin{proof}
Expand $f$ in the Fourier basis on the circle of length $2\omega\pi$ (note $a_0=0$ as $f$ has zero mean):
{$f(s)=\sum_{n\in\mathbb Z\setminus\{0\}}a_n \e^{i n s/\omega}$.}
{Parseval gives}
\[
\int f^2\,\mathrm ds=2\omega\pi\sum_{n\in\mathbb Z\setminus\{0\}}|a_n|^2,\ \ 
\int f_s^2\,\mathrm ds=2\omega\pi\sum_{n\in\mathbb Z\setminus\{0\}}\Big(\frac{n}{\omega}\Big)^2|a_n|^2,\ \ 
\int f_{ss}^2\,\mathrm ds=2\omega\pi\sum_{n\in\mathbb Z\setminus\{0\}}\Big(\frac{n}{\omega}\Big)^4|a_n|^2.
\]
So
\begin{align*}
4\int f_{ss}^2\,\mathrm ds-10\int f_s^2\,\mathrm ds+8\int f^2\,\mathrm ds
&=
\sum_{n\in\mathbb Z\setminus\{0\}} \left(
4\left(\frac{n}{\omega}\right)^4-10\left(\frac{n}{\omega}\right)^2+8\right) 2\omega\pi|a_n|^2
\\&\ge \lambda_\omega\, 2\omega\pi\sum_{n\in\mathbb Z\setminus\{0\}}|a_n|^2
= \lambda_\omega\,\int f^2\,\mathrm ds,
\end{align*}
as required.
\end{proof}

When we take $f = k-1$, we may use a novel estimate to further improve the spectral gap (and eventually obtain a better convergence rate).
 
\begin{corollary}\label{cor:gap-kminus1}
Let $\gamma:\S^1\to\R^2$ be a smooth immersed curve with length $2\omega\pi$ and set $f = k-1$ where $k$ is the curvature of $\gamma$.
Then
\begin{equation}\label{eq:gap-kminus1}
4\int f_{ss}^2\,\mathrm ds-10\int f_s^2\,\mathrm ds+8\int f^2\,\mathrm ds
 \ge \widehat\lambda_\omega\,\int f^2\,\mathrm ds
 - 4\omega\pi\widehat\lambda_\omega\left(\int f^2\,\mathrm ds\right)^{ 2},
\end{equation}
where 
\begin{equation}
\label{eq:defndelta2}
\widehat\lambda_\omega
:=\min_{n\in\mathbb Z\setminus\{0,\pm\omega\}}
\Big\{\,4\Big(\tfrac{n}{\omega}\Big)^4
       -10\Big(\tfrac{n}{\omega}\Big)^2
       +8\,\Big\}\,.
\end{equation}
\end{corollary}
\begin{proof}
Consider the first harmonics of $f$:
\[
I_c:=\int f(s)\cos s\,\mathrm ds,\qquad
I_s:=\int f(s)\sin s\,\mathrm ds.
\]
We claim that
\begin{equation}\label{eq:nu-cos-est}
\sup_{s\in[0,2\omega\pi]} \big|\nu(s)-(\cos s,\sin s)\big|
 \le  \int |k-1|\,\mathrm ds.
\end{equation}
To see this, let $R_\theta$ denote counter-clockwise rotation (and $\rot = R_{\frac\pi2}$) by angle $\theta\in\R$, and calculate
\begin{align*}
(R_{-s}\nu)_s
 = -\rot R_{-s}\nu - k R_{-s}\tau
 = -R_{-s}(k-1)\tau
\end{align*}
so $|(R_{-s}\nu)_s| = |k-1|$.
Rotating $\gamma$ so that $\nu(0) = (1,0)$ we find, using that $R_s$ is an isometry and the fundamental theorem of calculus, that
\[
|\nu - (\cos s,\sin s)|
= |R_{-s}\nu - (1,0)|
\le \int |k(\sigma)-1|\,d\sigma,
\qquad s\in[0,2\omega\pi]\,,
\]
which implies \eqref{eq:nu-cos-est}.

\ra{Now, since
\(
\int_\S (k-1)\,\nu\,\mathrm ds =0,
\)
we estimate the first harmonics as a vector.  With
\(I:=(I_c,I_s)\),
\begin{align*}
|I|
&= \left|\int (k-1)(\cos s,\sin s)\,\mathrm ds\right|  
 = \left|\int (k-1)\big((\cos s,\sin s)-\nu(s)\big)\,\mathrm ds\right| \\
&\le \Big(\int |k-1|\,\mathrm ds\Big)
     \sup_s |\nu(s)-(\cos s,\sin s)|.
\end{align*}
Combining with \eqref{eq:nu-cos-est} yields
\begin{equation}\label{eq:first-harmonic-est}
|I|^2=|I_c|^2+|I_s|^2
 \le  \Big(\int |k-1|\,\mathrm ds\Big)^4
 \le  \left(2\omega\pi\int (k-1)^2\,\mathrm ds\right)^2
 =  (2\omega\pi\,e)^2.
\end{equation}}
Recall that
\(
\cos s=\tfrac12(\e^{i s}+\e^{-i s})
=\tfrac12(\e^{i\omega s/\omega}
              +\e^{-i\omega s/\omega})
\),
and similarly for $\sin s$, so orthogonality gives
\[
\int f(s)\cos s\,\mathrm ds
=\omega\pi\bigl(a_\omega+a_{-\omega}\bigr),
\qquad
\int f(s)\sin s\,\mathrm ds
=-i\omega\pi\bigl(a_{-\omega}-a_\omega\bigr).
\]
Solving for $a_\omega,a_{-\omega}$ we find $a_\omega = \frac1{2\omega\pi}(I_c-iI_s)$ and 
$a_{-\omega} = \frac1{2\omega\pi}(I_c+iI_s)$.
Using the identity
\(
|\tfrac{A+i B}{2}|^2
+|\tfrac{A-i B}{2}|^2
=\tfrac12(|A|^2+|B|^2),
\)
we find
\begin{equation}\label{eq:a-omega-from-IcIs}
|a_\omega|^2+|a_{-\omega}|^2
=\frac{1}{2\omega^2\pi^2}\Big(|I_c|^2+|I_s|^2\Big).
\end{equation}

Now decompose
\[
e
{=2\omega\pi\sum_{n\in\mathbb Z\setminus\{0\}}|a_n|^2} 
{= 2\omega\pi\sum_{n\in\mathbb Z\setminus\{0,\pm\omega\}}|a_n|^2
+ 2\omega\pi\big(|a_\omega|^2+|a_{-\omega}|^2\big)}
= E_{\mathrm{rest}} + E_{\mathrm{tr}}
\]
where
\[
E_{\mathrm{rest}} := 2\omega\pi\sum_{n\in\mathbb Z\setminus\{0,\pm\omega\}}|a_n|^2
\]
and, combining \eqref{eq:first-harmonic-est} and \eqref{eq:a-omega-from-IcIs}, 
\begin{equation}
E_{\mathrm{tr}} := 2\omega\pi\big(|a_\omega|^2+|a_{-\omega}|^2\big)
\le 4\omega\pi e^2.
\label{eq:a-omega-small}
\end{equation}
Define $p(x) = 4x^4 - 10x^2+8$; then
\begin{align*}
Q(f)
&= {2\omega\pi\sum_{n\in\mathbb Z\setminus\{0\}}p\Big(\frac{n}{\omega}\Big)|a_n|^2}
\ge \widehat\lambda_\omega\,E_{\mathrm{rest}}
 =  \widehat\lambda_\omega\,(e - E_{\mathrm{tr}})
 \ge  \widehat\lambda_\omega\,e - 4\omega\pi \widehat\lambda_\omega e^2,
\end{align*}
using \eqref{eq:a-omega-small} in the last step. This is exactly
\eqref{eq:gap-kminus1}.
\end{proof}

We will use the following ODE comparison lemma.

\begin{lemma}
\label{lem:ODE-comparison}
Let \(e:[0,\infty)\to[0,\infty)\) be \(C^1\) and suppose that for some \(\alpha>0\), \(C>0\), and \(\sigma>0\),
\begin{equation}\label{eq:gen-ode}
\frac{d}{dt}e(t)\ \le\ -\alpha\,e(t)\ +\ C\,e(t)^{\,1+\sigma}\qquad\text{whenever }e(t)\le 1.
\end{equation}
If
\[
e(0)\ \le\ \varepsilon_0\ :=\ \min\Big\{\frac12,\ \Big(\tfrac{\alpha}{C}\,(1-2^{-\sigma})\Big)^{ 1/\sigma}\Big\}
\]
we have
\begin{equation}\label{eq:gen-2exp}
e(t)\ \le\ 2\,e(0)\,e^{-\alpha t}\qquad\text{for all }t\ge0.
\end{equation}
\end{lemma}
\begin{proof}
If \(e(0)=0\), the conclusion is trivial. Assume \(e(0)>0\). 
Set 
\(
T_0:=\inf\{t>0:e(t)=1\}
\) (with the convention \(T_0=\infty\) if the set is empty); since \(e(0)<1\),  continuity implies \(T_0>0\). 
On every interval \([0,T]\) with \(T<T_0\), we have
\(e(t)<1\) (so \eqref{eq:gen-ode} is valid).

Define
\[
\bar e(t)
:=
e(0) e^{-\alpha t}
\left[
1-\frac{C}{\alpha}e(0)^\sigma
\bigl(1-e^{-\sigma\alpha t}\bigr)
\right]^{-1/\sigma}.
\]
Observe that $\bar e(0)=e(0)$ and
\[
\frac{d}{dt}\bar e(t)=-\alpha \bar e(t)+C\bar e(t)^{1+\sigma}
=f(\bar e(t)),
\qquad
\text{where}\qquad 
f(s):=-\alpha s+Cs^{1+\sigma}
.
\]
The smallness assumption gives
\(
\frac{C}{\alpha}e(0)^\sigma\le 1-2^{-\sigma}
\),
and hence, for every \(t\ge0\),
\(
1-\frac{C}{\alpha}e(0)^\sigma
\bigl(1-e^{-\sigma\alpha t}\bigr)
\ge
1-\frac{C}{\alpha}e(0)^\sigma
\ge
2^{-\sigma}\). 
Thus \(\bar e\) is globally well-defined and
\(
\bar e(t)\le 2e(0)e^{-\alpha t}.
\)

ODE comparison with \(e\) and \(\bar e\) on \([0,T]\) thus implies
\(e(t)\le \bar e(t) \le 2e(0)e^{-\alpha t}\) for
$0\le t\le T$.
As \(T<T_0\) was arbitrary, this holds for all \(0\le t<T_0\). 
We now argue that \(T_0=\infty\). 
If \(T_0<\infty\), then $e(T_0)=1$. 
By continuity and the above estimate,
\(
e(T_0)\le 2e(0)e^{-\alpha T_0}<1
\), a contradiction.
Hence \(T_0=\infty\), and \eqref{eq:gen-2exp} follows.
\end{proof} 

\begin{corollary}
\label{cor:ODE}
There exists $\varepsilon_\omega>0$ depending only on $\omega$ such that if
$e(0)\le\varepsilon_\omega$, then along the rescaled flow \eqref{eq:rescaled}
\begin{equation}\label{eq:ode-final-improved}
e(t) \le 2\,e(0)\,\mathrm e^{-\widehat\lambda_\omega t}
\qquad\text{for all }t\ge0,
\end{equation}
where
$\widehat\lambda_\omega=\frac74+\delta_\omega$ and $\delta_\omega>0$.
\end{corollary}
\begin{proof}
Expanding $f(s)=\sum_{n\in\mathbb Z}a_n \e^{i n s/\omega}$ and using the elementary inequality
\[
{x^4+1\le 2\big(4x^4-10x^2+8\big)\qquad\text{for all }x\in\mathbb R,}
\]
we obtain
\[
\int f_{ss}^2\,\mathrm ds+\int f^2\,\mathrm ds
{=2\omega\pi\sum_{n\in\mathbb Z}\left[\Big(\frac{n}{\omega}\Big)^4+1\right]|a_n|^2}
 \le  2Q[f].
\]
Hence, after enlarging $C$ if necessary,
\[
(4-Ce^{1/2}) \int f_{ss}^2 -10 \int f_s^2 +(8-Ce^{1/2}) \int f^2
 \ge (1-Ce^{1/2})\,Q[f],
\]
so, using Proposition~\ref{prop:quad}, we can rewrite the differential inequality as
\begin{equation}\label{eq:eprime-Q}
\frac{d}{dt}e(t)
 \le -\,(1-Ce^{1/2})\,Q[f] + C e^{3/2}.
\end{equation}
Combining Corollary~\ref{cor:gap-kminus1} with \eqref{eq:eprime-Q}, using also $e\le1$, yields
\begin{align*}
\frac{d}{dt}e
\le -(1-Ce^{1/2})(\widehat\lambda_\omega e - 4\omega\pi\widehat\lambda_\omega e^2) + C e^{3/2}
\le -\widehat\lambda_\omega\,e(t) + C' e(t)^{3/2},
\end{align*}
where $C'=C'(\omega)$.

We now apply Lemma~\ref{lem:ODE-comparison} with
\(
\alpha=\widehat\lambda_\omega
=\frac74+\delta_\omega\), \(\sigma=\frac12
\)
to conclude the result.
\end{proof}

\section{Decay of the derivative of curvature and barycentre}

The constant $\varepsilon_\omega$ in \eqref{eq:koscsmall} is set to be that of Corollary \ref{cor:ODE}.
With exponential decay of $\Kosc$ for the rescaled flow now established, the argument from here to smooth convergence could be carried out `from scratch', an approach commonly taken in the literature. 
We opt instead to present another new integral estimate, which has two benefits.
First, it shortens the overall convergence proof, enabling us to apply \cite[Theorem 1.2]{MW25}.
Second, it may be of independent interest, showing that decay of the curvature in $L^2$ implies decay of the derivative of curvature in $L^2$; a (new) regularity property of the rescaled free elastic flow.

\begin{lemma}
Let $\gamma:\S^1\times[0,\infty)\to\R^2$ be a rescaled free elastic flow \eqref{eq:rescaled}.
We have
\begin{align}
\frac{d}{dt}||k_s||_{L^2}^2
&=
 - 4\int k_{sss}^2\,\ds
    + 10\int k_{ss}^2k^2\,\ds
    - \frac{10}{3}\int k_s^4\,\ds
    - 11\int k_s^2k^4\,\ds
\notag\\&\qquad
- \frac{3}{2\omega\pi}\int k_s^2\,\ds \left(
    2\int k_s^2\,\ds - \int k^4\,\ds
 \right)
 \,.
\label{EQevolks}
\end{align}
\end{lemma}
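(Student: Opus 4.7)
The approach is to differentiate $\int k_s^2\,\ds$ directly using the standard evolution equations for a normal flow $\partial_t\gamma=V\nu$: namely $k_t=V_{ss}+k^2V$, $(\ds)_t=-kV\,\ds$, and the commutator $[\partial_t,\partial_s]=kV\,\partial_s$. These combine to give $(k_s)_t=V_{sss}+3kk_sV+k^2V_s$. Differentiating in time and integrating by parts twice, I expect to reach the compact intermediate identity
\[
\frac{d}{dt}\int k_s^2\,\ds \;=\; -2\int k_{ss}V_{ss}\,\ds \;+\; \int kk_s^2V\,\ds \;-\; 2\int k^2k_{ss}V\,\ds,
\]
valid for any normal speed $V$. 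I would then write $V=-W+\lambda\phi$ with $W:=2k_{ss}+k^3$ and $\phi:=\gamma\!\cdot\!\nu$, and treat the ``principal'' ($W$) and ``rescaling'' ($\lambda\phi$) contributions separately.

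For the principal part, expanding $W_{ss}=2k_{ssss}+6kk_s^2+3k^2k_{ss}$ and applying the two elementary reductions $\int kk_s^2k_{ss}\,\ds=-\tfrac13\int k_s^4\,\ds$ (from $k_s^2k_{ss}=\tfrac13(k_s^3)_s$ and IBP) and $\int k^5k_{ss}\,\ds=-5\int k^4k_s^2\,\ds$ (direct IBP) should deliver exactly the four displayed terms $-4\int k_{sss}^2$, $+10\int k^2k_{ss}^2$, $-\tfrac{10}{3}\int k_s^4$, and $-11\int k^4k_s^2$. The coefficients $10=6+4$ and $-11=-1-10$ each arise from two different sources, but otherwise this step is routine bookkeeping.

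The main obstacle is showing that the rescaling contribution
\[
\lambda\!\left[\int kk_s^2\phi\,\ds \;-\; 2\int k^2k_{ss}\phi\,\ds \;-\; 2\int k_{ss}\phi_{ss}\,\ds\right]
\]
collapses to precisely $-3\lambda\int k_s^2\,\ds$, since then inserting the definition of $\lambda$ and using $L=2\omega\pi$ recovers the final term of \eqref{EQevolks}. For this I would use the geometric identities $\phi_s=-k\psi$ and $\psi_s=1+k\phi$, where $\psi:=\gamma\!\cdot\!\tau$, to compute $\phi_{ss}=-k_s\psi-k-k^2\phi$. Substituting: the $-k^2\phi$ piece exactly cancels the $-2\int k^2k_{ss}\phi$ term; the $-k$ piece yields $2\int kk_{ss}\,\ds=-2\int k_s^2\,\ds$ after one IBP; and the $-k_s\psi$ piece gives $2\int k_sk_{ss}\psi\,\ds=\int(k_s^2)_s\psi\,\ds=-\int k_s^2(1+k\phi)\,\ds$, which simultaneously cancels the original $\int kk_s^2\phi\,\ds$ and contributes a further $-\int k_s^2\,\ds$. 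Summing gives $-3\int k_s^2\,\ds$, as required. The delicate part is tracking signs through each IBP so that all $\phi$-- and $\psi$--dependent terms cancel in pairs; no deeper estimate is involved.
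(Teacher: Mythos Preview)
Your argument is correct and follows essentially the same route as the paper: both compute $(k_s^2\,\ds)_t$ from the standard evolution equations, split the normal speed into the principal part $\hat F=2k_{ss}+k^3$ and the rescaling part $\lambda\,(\gamma\!\cdot\!\nu)$, and reduce the latter to $-3\lambda\int k_s^2\,\ds$ via the Frenet identities $\phi_s=-k\psi$, $\psi_s=1+k\phi$. The only cosmetic differences are that you package the general identity as $-2\int k_{ss}V_{ss}+\int kk_s^2V-2\int k^2k_{ss}V$ (the paper keeps it one integration by parts earlier, in terms of $F_s$ and $\phi_{sss}$), and you carry out the $\hat F$ expansion explicitly whereas the paper defers that computation to \cite[Lemma~3.1]{MW25}.
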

\begin{proof}
First, the evolution equation for the derivative of curvature is
\[
\frac{d}{dt}k_{s} = 
  (F+\lambda(t)(\gamma\cdot\nu))_{sss} + (F+\lambda(t)(\gamma\cdot\nu))_sk^2 + 3(F+\lambda(t)(\gamma\cdot\nu))k_sk
\,.
\]
Thus 
\begin{align*}
\frac{d}{dt}(k_s^2\,\ds)
&= \left( 2k_s(F+\lambda(t)(\gamma\cdot\nu))_{sss} 
+ 2(F+\lambda(t)(\gamma\cdot\nu))_sk_sk^2 
+ 5(F+\lambda(t)(\gamma\cdot\nu))k_s^2k
\right)\,\ds
\,.
\end{align*}
Therefore
\begin{align*}
\frac{d}{dt}||k_s||_{L^2}^2
 &= 2\int
     F_{sss}k_s
    \,\ds
    +2\int
     F_sk_sk^2
    \,\ds
    +5\int
     Fk_s^2k
    \,\ds
\\
  &+\lambda(t)\left(2\int
     (\gamma\cdot\nu)_sk_{sss}
    \,\ds
    +2\int
     (\gamma\cdot\nu)_sk_sk^2
    \,\ds
    +5\int
     (\gamma\cdot\nu)k_s^2k
    \,\ds
    \right)
    \,.
\end{align*}

Let us now work on the terms multiplying $\lambda(t)$.
To prepare, we calculate some derivatives of $\gamma\cdot\nu$:
\begin{align*}
(\gamma\cdot\nu)_{sss}
 &= 
(-k\gamma\cdot\tau)_{ss}
\\ &= 
(-k_s\gamma\cdot\tau-k-k^2\gamma\cdot\nu)_{s}
\\ &= 
 -k_{ss}\gamma\cdot\tau
 -k_s
 -kk_s\gamma\cdot\nu
 -k_s
 -2kk_s\gamma\cdot\nu
 +k^3\gamma\cdot\tau
 \,.
\end{align*}
Integrating by parts then reveals
\begin{align*}
    2&\int
     (\gamma\cdot\nu)_sk_{sss}
    \,\ds
    +2\int
     (\gamma\cdot\nu)_sk_sk^2
    \,\ds
    +5\int
     (\gamma\cdot\nu)k_s^2k
    \,\ds
\\
&=
    2\int
      -k_{ss}k_s\gamma\cdot\tau
     +k^3k_s\gamma\cdot\tau
 -2k_s^2
 -3kk_s^2\gamma\cdot\nu
    \,\ds
\\&\quad
    -2\int
     \gamma\cdot\tau k_sk^3
    \,\ds
    +5\int
     (\gamma\cdot\nu)k_s^2k
    \,\ds
\\
&=
    \int
      -2k_{ss}k_s\gamma\cdot\tau
 -4k_s^2
 -kk_s^2\gamma\cdot\nu
    \,\ds
\\
&=
    \int
      k_s^2k\gamma\cdot\nu
      + k_s^2
 -4k_s^2
 -kk_s^2\gamma\cdot\nu
    \,\ds
= -3\int k_s^2\,\ds
\,.
\end{align*}
The result thus follows by simplifying the $F$ terms exactly as in \cite[Lemma 3.1]{MW25} (note that here our $F$ is $-2$ times the $F$ in \cite{MW25}).
\end{proof}

\begin{proposition}
\label{PPdecayks}
Let $\gamma:\S^1\times[0,\infty)\to\R^2$ be a rescaled free elastic flow \eqref{eq:rescaled} satisfying \eqref{eq:koscsmall}.
There exists a constant $t_\omega  \ge0$ depending only on $\omega$ such that 
\begin{equation}
\|k_s\|_{L^2}^2(t)
\le \|k_s\|_{L^2}^2(t_\omega)e^{-\frac1{2\omega^6} (t-t_\omega)}\,,\qquad t\ge t_\omega.
\label{eq:ksexpdecay}
\end{equation}
\end{proposition}
\begin{proof}
Set
\[
H(t):=\int k_s^2k^4\,\ds- \frac1{2\omega\pi}\left(\int k_s^2\,\ds\right) \left(\int k^4\,\ds\right).
\]
Using \eqref{EQevolks} and \ra{the explicit estimate
\begin{align*}
10\int k_{ss}^2k^2\,\ds
&= -10\int k_{sss}k_sk^2\,\ds
   +\frac{20}{3}\int k_s^4\,\ds  \\
&\le \delta_0\|k_{sss}\|_{L^2}^2
   +\frac{25}{\delta_0}\int k_s^2k^4\,\ds
   +\frac{20}{3}\int k_s^4\,\ds,
\end{align*}}
together with the identity
\[
-11\int k_s^2k^4\,\ds- \frac{3}{2\omega\pi}\int k_s^2\,\ds\left(2\int k_s^2\,\ds-\int k^4\,\ds\right)
= -8\int k_s^2k^4\,\ds-3H(t)-\frac{3}{\omega\pi}\left(\int k_s^2\,\ds\right)^2,
\]
we obtain
\begin{align}
\label{eqn:ksderiv}
\frac{d}{dt}\|k_s\|_{L^2}^2
&\le -(4-\delta_0)\|k_{sss}\|_{L^2}^2
-\left(8-\frac{25}{\delta_0}\right)\int k_s^2k^4\,\ds
\\
\notag
&\qquad\qquad+\frac{10}{3}\|k_s\|_{L^4}^4
-3H(t)- \frac{3}{\omega\pi}\left(\int k_s^2\,\ds\right)^2.
\end{align}
\ra{Here the coefficient \(25/\delta_0\) is exactly the Young constant in the preceding displayed estimate, and the coefficient \(10/3\) is obtained by combining \(-10/3\int k_s^4\,\mathrm ds\) from \eqref{EQevolks} with \(20/3\int k_s^4\,\mathrm ds\).}
To estimate $H(t)$, first write
\[
H(t)=\int k_s^2\big(k^4-\overline{k^4}\big)\,\ds
=\int k_s^2\Big(4f+6\big(f^2-\overline{f^2}\big)
+4\big(f^3-\overline{f^3}\big)+\big(f^4-\overline{f^4}\big)\Big)\,\ds.
\]
For the linear term, using
\eqref{eq:gns}  with $j=1$, $m=3$, $p=q=r=2$ and $\theta=\frac13$
%
%
yields 
\begin{equation}
\label{eqn:gns1}
\|k_s\|_{L^2}^3 \le \|f\|_{L^2}^{2}\|k_{sss}\|_{L^2}.
\end{equation}
Poincar\'e implies $\|f\|_{L^2}\le \omega^{3} \|k_{sss}\|_{L^2}$,
thus $\|k_s\|_{L^2}^3 \le \omega^{3}\sqrt{e(t)}\|k_{sss}\|_{L^2}^2$ and 
\begin{align*}
\int k_s^2|f|\,\ds
&\le \|f\|_{L^\infty}\|k_s\|_{L^2}^2
\le \|k_s\|_{L^1}\|k_s\|_{L^2}^2
\le (2\omega\pi)^{1/2}\|k_s\|_{L^2}^3
\le C\omega^{\frac72}\sqrt{e(t)}\|k_{sss}\|_{L^2}^2\,.
\end{align*}
We also estimate, using H\"older, the fundamental theorem of calculus, and the Poincar\'e inequality
\[
\int k_s^2\Big|f^2-\frac{e(t)}{2\omega\pi}\Big|\,\ds
\le \int k_s^2 f^2\,\ds+\frac{e(t)}{2\omega\pi}\int k_s^2\,\ds
\le e(t)(||k_s||_{L^\infty}^2 + C\omega^{-1}||k_s||_{L^2}^2)
\le C\omega^3 e(t) \|k_{sss}\|_{L^2}^2.
\]
For the cubic and quartic part of $H(t)$, if $P(x):=4x^3+x^4$, the fundamental theorem of calculus, H\"older and Young gives
\begin{align*}
|P(f)-\overline{P(f)}|
&\le \int |(P(f))_s|\,\ds
\le 12\int |k_s|f^2\,\ds + 4\int |k_s||f|^3\,\ds
\le ||k_s||_{L^1}(12||f||_{L^\infty}^2
 + 4||f||_{L^\infty}^3)
\\
&\le ||k_s||_{L^1}^3(12
 + 4||k_s||_{L^1})
\le \delta + C_{\delta}\,\omega^2\|k_s\|_{L^2}^4,
\end{align*}
and therefore, using again Poincar\'e 
and \eqref{eqn:gns1}:
\[
\int k_s^2\big|4(f^3-\overline{f^3})+(f^4-\overline{f^4})\big|\,\ds
\le \delta\,\|k_{s}\|_{L^2}^2 + C_\delta\,\omega^2\|k_s\|_{L^2}^6
\le \delta\,\omega^4\|k_{sss}\|_{L^2}^2 + C_\delta\,\omega^2\,e(t)^2\|k_{sss}\|_{L^2}^2.
\]
Combining the preceding bounds, we obtain the requisite estimate for $H(t)$:
\begin{equation}
\label{eqn:hest}
-3H(t)\le 3|H(t)|
\le C\Big(\omega^{\frac72}\sqrt{e(t)}
    + \omega^3e(t)
    + \delta\omega^4
    + C_\delta\omega^2\,e(t)^2\Big)\|k_{sss}\|_{L^2}^2\,,\qquad
0<\delta\le1.
\end{equation}
The only term remaining to estimate in \eqref{eqn:ksderiv} is $(10/3)||k_s||_{L^4}^4$.
For this it suffices to apply \eqref{eq:gns}  with $j=1$, $m=3$, $p=4$, $q=r=2$ and $\theta=\frac5{12}$, which is the estimate
%
%
\begin{equation*}
\|k_s\|_{L^4}^4
  \le C\|f\|_{L^2}^{7/3}\|k_{sss}\|_{L^2}^{5/3}
.
\end{equation*}
Using Poincaré, \(\|f\|_{L^2}\le \omega^3\|k_{sss}\|_{L^2}\), we obtain
\begin{equation}
\label{eqn:l4est}
  \|k_s\|_{L^4}^4
  \le C\omega\, e(t)\,\|k_{sss}\|_{L^2}^2 .
\end{equation}
Now, combining the estimates \eqref{eqn:hest}, \eqref{eqn:l4est} with the evolution equation \eqref{eqn:ksderiv}, discarding good terms, and choosing $\delta_0 = 25/8$, we arrive at
\begin{align*}
\frac{d}{dt}\|k_s\|_{L^2}^2
&\le -\Big(\frac78-C\omega^{\frac72}\sqrt{e(t)}
    - C\omega^3e(t)
    - C\delta\omega^4
    - C_\delta\omega^2\,e(t)^2
    - C\omega e(t)\Big)\|k_{sss}\|_{L^2}^2
\\&=: -\Big(\frac78-F(\omega,e(t),\delta)\Big)\|k_{sss}\|_{L^2}^2.
\end{align*}
Observe that $F(\omega,e(t),1/(8C\omega^4))\to\frac18$ (recall that Corollary~\ref{cor:ODE} implies $e(t)\to0$ exponentially), and so take $t\ge t_\omega$ where $F(\omega,e(t),1/(8C\omega^4))\le\frac38$ for all $t\ge t_\omega$.
Observe that $t_\omega$ depends only upon $\omega$ and the estimate \eqref{eq:ode-final-improved}, which depends again upon $\omega$ and additionally an upper bound on $e(0)$. However as we are working under the assumption $e(0)\le\varepsilon_\omega$, and $\varepsilon_\omega$ depends only on $\omega$, we obtain that $t_\omega$ also finally depends only on $\omega$.

Then, using also Poincar\'e, 
\[
\frac{d}{dt}\|k_s\|_{L^2}^2
\le -\frac12\|k_{sss}\|_{L^2}^2
\le -\frac12\omega^{-6}\|k_s\|_{L^2}^2.
\]
The stated exponential decay follows from Gr\"onwall.
\end{proof}

Our convergence statement uses the following invariant distance.

\begin{definition}[Invariant $W^{2,2}$ distance]\label{def:Sob-GH}
For two $W^{2,2}$ arclength parametrised immersions
$\gamma_1,\gamma_2:\S^1\to\R^2$ define
\begin{equation}\label{eq:d2-def}
  \mathbf d_2(\gamma_1,\gamma_2)
  :=\inf_{\sigma\in\S^1}
      \|\gamma_1-\gamma_2(\cdot+\sigma)\|_{W^{2,2}(\S^1)} .
\end{equation}
\end{definition}

The next proposition is used only after Proposition~\ref{PPdecayks} and
\cite[Theorem~1.2]{MW25} have supplied smooth convergence to a unit
\(\omega\)-circle.

\begin{proposition}[Barycentre decay]\label{prop:barycentre-decay}
Let $\gamma:\S^1\times[0,\infty)\to\R^2$ be a rescaled free elastic flow \eqref{eq:rescaled} satisfying \eqref{eq:koscsmall} 
that converges smoothly to a unit $\omega$-circle.

Then the barycentre of the limiting circle is the origin and, for some constant $C=C(\gamma(\cdot,0),\omega)$,
\begin{equation}\label{eq:cm-decay}
  \left|\overline\gamma(t)\right|
  \le C e^{-t}\qquad (t\ge0).
\end{equation}
\end{proposition}
\begin{proof}
Use the normal angle $\vartheta$, in terms of which we express the outward normal $n$ and tangent $\tau$ as
\[
 n=(\cos\vartheta,\sin\vartheta),\qquad
 \tau=(-\sin\vartheta,\cos\vartheta)\,.
\]
We further have the following formulae for the support function $h$ and radius of curvature $\rho$:
\[
 h(\vartheta,t)=\gamma(\vartheta,t)\cdot n(\vartheta)
 =-\gamma(\vartheta,t)\cdot\nu(\vartheta,t),
 \qquad
 \rho(\vartheta,t)=k^{-1}=h+h_{\vartheta\vartheta}.
\]
Set $u:=\rho-1$ and $g:=h-1$.

Apply Corollary \ref{cor:ODE}, so that \eqref{eq:ode-final-improved} holds (and for $t\ge t_\omega$, \eqref{eq:ksexpdecay} holds as well).
Smooth convergence implies that for all $t$ sufficiently large, $\rho$ is in a neighbourhood of 1. 
Fix \(t_*=t_*(\gamma(\cdot,0),\omega)\) sufficiently large that
\(1/2\le\rho\le2\) and all  estimates used below hold for
\(t\ge t_*\).

Then, because \(
  \frac12\int u^2\,d\vartheta\le \int \frac{(1-\rho)^2}{\rho}\,d\vartheta = e(t) \le 2\int u^2\,d\vartheta
\), we also have
\begin{equation}\label{eq:e-decay}
  \|u(\cdot,t)\|_{L^2(d\vartheta)}
  \le C e(t)^{1/2}
  \le C e^{-\frac12\widehat\lambda_\omega t},
  \qquad t\ge t_* .
\end{equation}
In the estimates below, \(C\) may vary from line to line.  Constants using only
the small-energy estimates depend on \(\omega\); constants using the
assumed smooth convergence also depend on the initial datum $\gamma(\cdot,0)$.

Decompose
\[
  g(\vartheta,t)=a_1(t)\cos\vartheta+a_2(t)\sin\vartheta+w(\vartheta,t),
\]
with $w$ orthogonal to $1,\cos\vartheta,\sin\vartheta$.  Since
$u=(I+\partial_\vartheta^2)g=(I+\partial_\vartheta^2)w$, we may estimate using a simple argument
\begin{equation}\label{eq:w-invert}
  \|w\|_{H^2(d\vartheta)}\le C\|u\|_{L^2(d\vartheta)} .
\end{equation}
Smooth convergence gives, for each \(m\), a finite constant
\(C_{\omega,m,\gamma_0}\) such that
\(
  \sup_{t\ge t_*}\|u(\cdot,t)\|_{H^m(d\vartheta)}^2
  \le C_{\omega,m,\gamma_0}
\).
Using this for $m=6$, we estimate
\begin{equation}\label{eq:u-C2-decay}
\|u\|_{C^2}
\le C\|u\|_{H^3}
\le C\|u\|_{L^2}^{1/2}\|u\|_{H^6}^{1/2}
\le C e(t)^{1/4}.
\end{equation}
Let $\phi_i\in\{\cos\vartheta,\sin\vartheta\}$ and
$a_i=(\omega\pi)^{-1}\int h\phi_i\,d\vartheta$.
Using \eqref{eq:rescaled}
\[
  a_i'=\frac1{\omega\pi}\int (2k_{ss}+k^3+\lambda h)\phi_i\,d\vartheta.
\]
We write
\(
  2k_{ss}+k^3+\lambda h
  =(2k_{ss}+k^3-1)-g+(\lambda+1)h
\). 
The projection of $-g$ is $-a_i$.  For the remaining geometric part, using
$k=(1+u)^{-1}$ and $\partial_s=(1+u)^{-1}\partial_\vartheta$, a direct calculation yields
\[
2k_{ss}+k^3-1=-(2u_{\vartheta\vartheta}+3u)+R(u),
\]
where, for $\|u\|_{C^0}$ small,
\(
  |R(u)|\le C\big(|u|\,|u_{\vartheta\vartheta}|+u_\vartheta^2+u^2\big)
\). 
Since $u$ has no first harmonics,
\[
  \int (2u_{\vartheta\vartheta}+3u)\phi_i\,d\vartheta
  =\int u(2\phi_i''+3\phi_i)\,d\vartheta
  =\int u\phi_i\,d\vartheta=0.
\]
Thus, using \eqref{eq:u-C2-decay} and
$\int u_\vartheta^2\,d\vartheta=-\int u u_{\vartheta\vartheta}\,d\vartheta$,
\begin{equation}\label{eq:geom-first-mode}
  \left|\int(2k_{ss}+k^3-1)\phi_i\,d\vartheta\right|
  \le C\int |R(u)|\,d\vartheta
  \le C\left( \|u_{\vartheta\vartheta}\|_{L^\infty} \|u\|_{L^2} + e(t)\right)
  \le C e(t)^{3/4}\,.
\end{equation}
Equivalently, keeping the translation mode in its exact linear form, we have
\begin{equation}\label{eq:a-lambda-form}
  a'(t)=\lambda(t)a(t)+\mathcal G(t),
  \qquad
  |\mathcal G(t)|\le C e(t)^{3/4}.
\end{equation}
Indeed, for \(\phi_i\in\{\cos\vartheta,\sin\vartheta\}\),
\[
  \mathcal G_i(t)
  :=
  \frac1{\omega\pi}\int (2k_{ss}+k^3-1)\phi_i\,d\vartheta,
\]
and the estimate is precisely \eqref{eq:geom-first-mode}.

Recall that (see \eqref{eq:rescaled})
\(
  \lambda(t)+1=\frac1{2\omega\pi}\left(2\int k_s^2\,ds-
  \int(k^4-1)\,ds\right)
\). 
Now 
$\int k_s^2\,ds=\int(1+u)^{-5}u_\vartheta^2\,d\vartheta
\le C e(t)^{3/4}$, while the integrand of the second term 
$\int(k^4-1)\,ds=\int((1+u)^{-3}-(1+u))\,d\vartheta$ has vanishing linear part because
$\int u\,d\vartheta=0$.
Hence
\begin{equation}\label{eq:lambda-plus-one}
  |\lambda(t)+1|\le C e(t)^{3/4}.
\end{equation}
Since the amplitudes $a_i$ are bounded for $t\ge t_*$, \eqref{eq:geom-first-mode}
and \eqref{eq:lambda-plus-one} give
\begin{equation}\label{eq:a-ODE-clean}
  a_i'(t)=-a_i(t)+\mathcal N_i(t),
  \qquad |\mathcal N_i(t)|\le C e(t)^{3/4}\qquad(t\ge t_*).
\end{equation}
Because $\frac34\widehat\lambda_\omega>1$, Lemma~\ref{lem:duhamel} implies
\begin{equation}\label{eq:a-decay}
  |a(t)|\le C e^{-t}\qquad(t\ge t_*).
\end{equation}
In particular
$a(t)\to0$, so the limiting circle is centred at the origin.
Indeed, if the limiting unit circle has centre \(p\), its support function is
\(1+p\cdot n\), so \(a(t)\to p\).

Finally, (recalling $\gamma = hn+h_\vartheta\tau$, $h=1+g$, and integrating by parts)
\begin{align}
  \overline\gamma(t) &= \frac1{2\omega\pi}\int
  (h n+h_\vartheta\tau)\rho\,d\vartheta 
  = \frac1{2\omega\pi}\int
  (h^2n 
  - h_{\vartheta\vartheta}h_\vartheta\tau)\,d\vartheta 
  \notag
  \\
  &= \frac1{2\omega\pi}\int
  ((1+g)^2n 
  - g_{\vartheta\vartheta}g_\vartheta\tau)\,d\vartheta 
  = \frac1{\omega\pi} \int gn\,d\vartheta
  + \frac1{2\omega\pi}\int
  (g^2n 
  - g_{\vartheta\vartheta}g_\vartheta\tau)\,d\vartheta 
  .
\label{eq:gammabar}
\end{align}
The first term (the linear part) is exactly $a(t)$. 
The remaining terms are quadratic, and we claim
\begin{equation}
\label{eq:claim}
  |\overline\gamma(t)-a(t)|
  \le C\big(|a(t)|\|u\|_{L^2(d\vartheta)}+\|u\|_{L^2(d\vartheta)}^2\big).
\end{equation}
To see this, recall $g = b + w$ where $b=a\cdot n$, $a = (a_1,a_2)$.
Since
\(
  b n+b_\vartheta\tau
  =(a\cdot n)n+(a\cdot\tau)\tau
  =a
\)
we have
\[
  \int \big(b^2n-b_{\vartheta\vartheta}b_\vartheta\tau\big)\,d\vartheta
  =
  \int b\big(bn+b_\vartheta\tau\big)\,d\vartheta
  =
  a\int b\,d\vartheta
  =0 .
\]
Combining with \eqref{eq:gammabar} we find
\begin{align*}
  |\overline\gamma(t)-a(t)|
&= \bigg|\frac1{2\omega\pi}\int (b^2n - b_{\vartheta\vartheta}b_\vartheta\tau)\,d\vartheta
+ \frac1{2\omega\pi}\int (w^2n - w_{\vartheta\vartheta}w_\vartheta\tau)\,d\vartheta
\\&\qquad\qquad\qquad\qquad + \frac1{2\omega\pi}\int (2bw\,n - (
b_{\vartheta\vartheta}w_\vartheta
+ w_{\vartheta\vartheta}b_\vartheta
)\tau)\,d\vartheta
\bigg|
\\&= \bigg|\frac1{2\omega\pi}\int (w^2n - w_{\vartheta\vartheta}w_\vartheta\tau)\,d\vartheta
+ \frac1{2\omega\pi}\int (2bw\,n - (
b_{\vartheta\vartheta}w_\vartheta
+ w_{\vartheta\vartheta}b_\vartheta
)\tau)\,d\vartheta
\bigg|
\\&\le C
  \int
  \Big(
    |b||w|
    + |b_{\vartheta\vartheta}||w_\vartheta|
    + |b_\vartheta||w_{\vartheta\vartheta}|
    + |w|^2
    + |w_\vartheta||w_{\vartheta\vartheta}|
  \Big)\,d\vartheta                                      \\
  &\le C\Big(
    |a(t)|\|w\|_{H^{2}(d\vartheta)}
    + \|w\|_{H^{2}(d\vartheta)}^2
  \Big)                                                   
  \le C\Big(
    |a(t)|\|u\|_{L^2(d\vartheta)}
    + \|u\|_{L^2(d\vartheta)}^2
  \Big);
\end{align*}
where we used \eqref{eq:w-invert}.
Thus we have shown \eqref{eq:claim}.
Together with \eqref{eq:e-decay} and \eqref{eq:a-decay}, this proves
\eqref{eq:cm-decay} for \(t\ge t_*\). Enlarging \(C\) on the finite interval
\([0,t_*]\) gives \eqref{eq:cm-decay} for all \(t\ge0\).
\end{proof}

\section{Proof of the main results}

\begin{proof}[Proof of Theorem \ref{thm:main}]
Corollary~\ref{cor:ODE} implies the stated decay of $\Kosc$. 
Proposition~\ref{PPdecayks} gives \(\|k_s\|_{L^2}^2(t)\to0\). Hence, for some
\(t_1\), the smallness hypothesis of \cite[Theorem~1.2]{MW25} is satisfied
for the flow restarted at \(t_1\) (equivalently, for the corresponding
unrescaled curve at time \(\tGamma(t_1)\)). The time change between the
unrescaled and rescaled evolutions is monotone, so \cite[Theorem~1.2]{MW25}
gives smooth convergence of the rescaled flow to a unit \(\omega\)-circle.
Proposition~\ref{prop:barycentre-decay}
then gives \eqref{eq:cm-decay}, so this limiting circle is the one centred at
the origin, denoted $\gamma_\omega$.

It remains to show \eqref{eq:d2conv}.
Put both curves in the
arclength gauge on $\S^1$ and choose the shift of $\gamma_\omega$ so that
the angle difference $\varphi$ between the two tangents has zero mean.  Then
$\varphi_s=k-1$, and Poincar\'e gives
\[
  \|\tau-\tau_\omega\|_{L^2}+\|\nu-\nu_\omega\|_{L^2}
  \le C_\omega\|\varphi\|_{L^2}
  \le C_\omega e(t)^{1/2}.
\]
Consequently
\[
  \|(\gamma-\gamma_\omega)_s\|_{L^2}
  +\|(\gamma-\gamma_\omega)_{ss}\|_{L^2}
  \le C_\omega e(t)^{1/2},
\]
because $(\gamma-\gamma_\omega)_{ss}=(k-1)\nu+(\nu-\nu_\omega)$.  Since
$\int_{\gamma_\omega}\gamma_\omega\,ds=0$, Poincar\'e applied to
$\gamma-\gamma_\omega-\overline\gamma(t)$ yields
\begin{equation}\label{eq:W22-sharp}
  \mathbf d_2(\gamma(\cdot,t),\gamma_\omega)
  \le C_\omega e(t)^{1/2}+C_\omega|\overline\gamma(t)| .
\end{equation}
Using \eqref{eq:cm-decay} and the decay of $e(t)$,
\[
  \mathbf d_2(\gamma(\cdot,t),\gamma_\omega)
  \le C\left(e^{-\frac12\widehat\lambda_\omega t}+e^{-t}\right)
  \le C e^{-\min\{\widehat\lambda_\omega/2,1\}t}.
\]
Finally,
\[
  \min\{\widehat\lambda_\omega/2,1\}
  =
  \frac12\left(\frac74+\min\left\{\frac14,\delta_\omega\right\}\right),
\]
which is exactly \eqref{eq:d2conv} with $r_\omega=\min\{\frac14,\delta_\omega\}$.
\end{proof}

\subsection{A pre-eminent choice of origin}\label{subsec:preeminent-origin}

We now remove the translation mode by choosing the origin at the limiting
centre of mass of the unrescaled flow.

\begin{proof}[Proof of Theorem~\ref{thm:preeminent-origin}]
Let \(\Gamma(\cdot,\tGamma)\) be the unrescaled flow and let
\(\gamma(\cdot,t)\) be its length-normalised rescaling.  We write
\(\sigma(t):=\sigma(\tGamma(t))\), so that
\(
  \Gamma(\cdot,\tGamma(t))=\sigma(t)\gamma(\cdot,t)\),
  \(
  \frac{d\tGamma}{dt}=\sigma(t)^4
\), and (as in the derivation of \eqref{eq:rescaled}),
\( \sigma_t=-\lambda(t)\sigma\).
Set
\(
  \alpha_\omega:=\frac34\widehat\lambda_\omega>1\) (as noted between \eqref{eq:a-ODE-clean} and \eqref{eq:a-decay}).

We use the notation from Proposition~\ref{prop:barycentre-decay}. Thus
\(h=1+g\), \(u=\rho-1\), \(a(t)=(a_1(t),a_2(t))\), and
\(
  g(\vartheta,t)=a(t)\cdot n(\vartheta)+w(\vartheta,t).
\)
Since 
\(
  \frac{d}{dt}\big(\log\sigma(t)-t\big)
  =-(\lambda(t)+1)
\), 
the estimate \eqref{eq:lambda-plus-one} implies $(\lambda(t)+1)$ is integrable, hence
\begin{equation}\label{eq:sigma-asymp-preem}
  \sigma(t)=c_*e^t(1+o(1)),
  \qquad c_*>0.
\end{equation}
In particular, \(\sigma(t)\simeq e^t\) for large \(t\).

Define the unrescaled translation mode
\[
  B(t):=\sigma(t)a(t).
\]
Differentiating reveals
\(
  B'(t)
  =
  \sigma_t a+\sigma a'
  =
  -\lambda\sigma a+\sigma(\lambda a+\mathcal G)
  =
  \sigma(t)\mathcal G(t).
\)
Therefore, by \eqref{eq:e-decay}, \eqref{eq:a-lambda-form}, and \eqref{eq:sigma-asymp-preem},
\[
  |B'(t)|
  \le C e^{-(\alpha_\omega-1)t}.
\]
Since \(\alpha_\omega>1\), \(B'\) is integrable on a tail, and hence \(B(t)\)
converges. 
Set 
\(  p_\infty:=\lim_{t\to\infty}B(t)
  =
  \lim_{t\to\infty}\sigma(t)a(t)
\), and note that
\(
  |B(t)-p_\infty|
  \le C e^{-(\alpha_\omega-1)t}
\). 
Let us now identify \(p_\infty\) as the limiting centre of mass of the unrescaled
flow: $p_\infty$ is our pre-eminent choice of origin.  
Since \(\Gamma=\sigma\gamma\) and \(ds_\Gamma=\sigma ds\), we have \(  \overline\Gamma(\tGamma(t))=\sigma(t)\overline\gamma(t)
\) and the geometric estimate \eqref{eq:claim} gives
\[
  |\overline\gamma(t)-a(t)|
  \le
  C\big(
    |a(t)|\|u\|_{L^2(d\vartheta)}
    +\|u\|_{L^2(d\vartheta)}^2
  \big).
\]
Multiplying by \(\sigma(t)\), and using \(B=\sigma a\), we find
\[
  \sigma(t)|\overline\gamma(t)-a(t)|
  \le
  C\Big(
    |B(t)|\|u\|_{L^2(d\vartheta)}
    +\sigma(t)\|u\|_{L^2(d\vartheta)}^2
  \Big).
\]
The first term tends to zero because \(B(t)\) converges and
\(\|u\|_{L^2}\to0\).  The second tends to zero by
\eqref{eq:e-decay} and \(\sigma(t)\simeq e^t\), since
\(\widehat\lambda_\omega>1\).  Hence
\[
  \lim_{\tGamma\to\infty}\overline\Gamma(\tGamma)
  =
  \lim_{t\to\infty}\sigma(t)\overline\gamma(t)
  =
  \lim_{t\to\infty}\sigma(t)a(t)
  =
  p_\infty.
\]

Now translate the unrescaled flow by $p_\infty$:
\[
  \Gamma^\sharp(\cdot,\tGamma)
  :=
  \Gamma(\cdot,\tGamma)-p_\infty,
  \qquad
  \gamma^\sharp(\cdot,t)
  :=
  \sigma(t)^{-1}\Gamma^\sharp(\cdot,\tGamma(t))
  =
  \gamma(\cdot,t)-\frac{p_\infty}{\sigma(t)}.
\]
All geometric quantities remain unchanged, including \(e(t)\).
Translation by
\(q\in\mathbb R^2\) sends the support function \(h\) to \(h-q\cdot n\), so
the first-harmonic vector of \(\gamma^\sharp\) is
\[
  a^\sharp(t)
  =
  a(t)-\frac{p_\infty}{\sigma(t)}
  =
  \frac{B(t)-p_\infty}{\sigma(t)},
\]
which (using \(
  |B(t)-p_\infty|
  \le C e^{-(\alpha_\omega-1)t}
\) and \(\sigma(t)\simeq e^t\)) obeys the estimate 
\(
  |a^\sharp(t)|
  \le C e^{-\alpha_\omega t}
\). 
Applying \eqref{eq:claim} to \(\gamma^\sharp\), with \(a^\sharp\) in place of
\(a\), gives
\[
  |\overline{\gamma^\sharp}(t)-a^\sharp(t)|
  \le
  C\big(
    |a^\sharp(t)|\|u\|_{L^2(d\vartheta)}
    +\|u\|_{L^2(d\vartheta)}^2
  \big),
\]
which implies \(
  |\overline{\gamma^\sharp}(t)|
  \le C e^{-\alpha_\omega t}
\). 
Thus the estimate \eqref{eq:W22-sharp} applied to
\(\gamma^\sharp\) yields
\begin{equation}
\label{eq:gammasharpconv}   
  \mathbf d_2\big(\gamma^\sharp(\cdot,t),\gamma_\omega\big)
  \le
  C e^{-\frac12\widehat\lambda_\omega t}
  + C|\overline{\gamma^\sharp}(t)|
  \le
  C e^{-\frac12\widehat\lambda_\omega t}.
\end{equation}
After enlarging \(C\), this holds on the initial finite interval as well.

It remains to translate this estimate into unrescaled time.  From
\eqref{eq:sigma-asymp-preem} and \(d\tGamma/dt=\sigma(t)^4\),
\[
  \tGamma(t)
  =
  \int_0^t \sigma(s)^4\,ds
  =
  \frac{c_*^4}{4}e^{4t}(1+o(1)).
\]
Thus, for all sufficiently large \(\tGamma\),
\[
  e^{-\frac12\widehat\lambda_\omega t(\tGamma)}
  \le
  C(1+\tGamma)^{-\frac18\widehat\lambda_\omega}.
\]
Combining this with \eqref{eq:gammasharpconv}, we get
\[
  \mathbf d_2\big(\gamma^\sharp(\cdot,t(\tGamma)),\gamma_\omega\big)
  \le
  C(1+\tGamma)^{-\,\frac18\widehat\lambda_\omega}
  =
  C(1+\tGamma)^{-\,\frac18\left(\frac74+\delta_\omega\right)}
\]
for all \(\tGamma\ge\tGamma_0\), as claimed.
\end{proof}
\appendix

\section*{Appendix}

\setcounter{equation}{0}
\renewcommand{\theequation}{A.\arabic{equation}}
\renewcommand{\theHequation}{A.\arabic{equation}}
\begin{lemma}[Variation-of-constants/Duhamel bound]\label{lem:duhamel}
{Let $t_0\ge0$} and let $a:[t_0,\infty)\to\R$ solve
\[
a'(t)=-\,a(t)+F(t)\qquad (t\ge t_0),
\]
with $|F(t)|\le C_F\,\mathrm e^{-\alpha t}$ for some $\alpha>0$. Then, for all $t\ge t_0$,
\begin{equation}\label{eq:duhamel-general}
|a(t)|\ \le\ \mathrm e^{-(t-t_0)}|a(t_0)|\ +\ 
\begin{cases}
\displaystyle \frac{C_F}{|1-\alpha|}\left|\mathrm e^{-\alpha t}-\mathrm e^{-(t-t_0)}\mathrm e^{-\alpha t_0}\right|, & \alpha\neq 1,\\[0.6em]
\displaystyle C_F\,(t-t_0)\,\mathrm e^{-t}, & \alpha=1.
\end{cases}
\end{equation}
In particular, for a constant \(C\) depending on \(a(t_0)\), \(C_F\),
\(t_0\), and \(\alpha\),
\[
|a(t)|\le
\begin{cases}
C e^{-\alpha t}, & 0<\alpha<1,\\
C(1+t)e^{-t}, & \alpha=1,\\
C e^{-t}, & \alpha>1.
\end{cases}
\]
\end{lemma}
\begin{proof}
Multiply the ODE by $\mathrm e^{t}$ to get $(\mathrm e^{t}a(t))'=\mathrm e^{t}F(t)$ and integrate:
\[
a(t)=\mathrm e^{-(t-t_0)}a(t_0)+\int_{t_0}^{t}\mathrm e^{-(t-s)}F(s)\,ds.
\]
The bounds in \eqref{eq:duhamel-general} follow by estimating the integral explicitly:
for $\alpha\neq1$,
\[
\int_{t_0}^{t}\mathrm e^{-(t-s)}\mathrm e^{-\alpha s}\,ds
=\mathrm e^{-t}\int_{t_0}^{t}\mathrm e^{(1-\alpha)s}\,ds
=\frac{1}{1-\alpha}\Big(\mathrm e^{-\alpha t}-\mathrm e^{-(t-t_0)}\mathrm e^{-\alpha t_0}\Big),
\]
and hence the stated bounds.
When \(\alpha=1\), the integral equals \((t-t_0)\mathrm e^{-t}\).
The three stated consequences follow immediately from
\eqref{eq:duhamel-general}: if \(0<\alpha<1\), both terms are bounded by
\(C\mathrm e^{-\alpha t}\); if \(\alpha>1\), both terms are bounded by
\(C\mathrm e^{-t}\); and if \(\alpha=1\), they are bounded by
\(C(1+t)\mathrm e^{-t}\). This proves the lemma.
\end{proof}

\bibliographystyle{plain}
\bibliography{FEF}

\end{document}